\DeclareMathOperator{\dethad}{dh}
\newtheorem{proposition}{Proposition}
\theoremstyle{remark}
\title{Quadratic heptagon cohomology}
\author{Igor G. Korepanov}
\date{October 2021}
\begin{document}

\sloppy

\maketitle

\begin{flushright}{\it To the memory of Aristophanes Dimakis}\end{flushright}

\medskip

\begin{abstract}
A cohomology theory is proposed for the recently discovered heptagon relation---an algebraic imitation of a 5-dimen\-sional Pachner move 4--3. In particular, `quadratic cohomology' is introduced, and it is shown that it is quite nontrivial, and even more so if compare heptagon with either its higher analogues, such as enneagon or hendecagon, or its lower analogue, pentagon. Explicit expressions for the nontrivial quadratic heptagon cocycles are found in dimensions 4 and~5.
\end{abstract}

\section{Introduction}\label{s:i}

\subsection{Heptagon and cohomology}\label{ss:hc}

This paper is the continuation of paper~\cite{hepta_1} where the first nontrivial heptagon relation was found. We recall here that by heptagon relation, we mean an algebraic imitation of a Pachner move 4--3 in a triangulation of a five-dimen\-sional piecewise linear (PL) manifold, see~\cite{Pachner,Lickorish} for Pachner moves and~\cite{hepta_1,DM-H,DK,cubic, nonconstant} for heptagon and other polygon relations. Discovering heptagon relations may be viewed as a preparational algebraic work for constructing invariants of these manifolds and related topological field theories.

It is known, however, that often more refined invariants can be obtained if we use not only a relation like heptagon but also its \emph{cohomology}. Enough to mention, as an example, invariants of knots and knotted surfaces coming from just \emph{quandles} and from \emph{quandle cohomology}, reviewed in~\cite{CKS}.

It can be expected that, for five-dimen\-sional PL manifolds, \emph{heptagon 5-cocycles} will play the principal role. As explained in~\cite{hepta_1}, we \emph{color} the 4-faces of a triangulation with elements of a field~$F$, and declare some of the colorings for (the faces of) each 5-simplex \emph{permitted}. Roughly speaking, heptagon itself is expected to give, as a manifold~$M$ invariant, the dimension of the vector space of its triangulation's permitted colorings, corrected by some simple multiplier. Different values~$c[M]$ of a heptagon 5-cocycle~$c$ on~$M$ can add more information to that invariant.

Field~$F$ must be big enough to allow for some sets of its elements to be in a `general position', see, for instance, an assumption made below right after formula~\eqref{dijk}. When speaking of quadratic cohomology, we also assume that its characteristic is not~2. Moreover, for some of our calculations, $F$ must be of characteristic~0, which is explicitly indicated in the relevant places in the text.

The heptagon relation and its cohomology studied here are \emph{nonconstant}. This means that the specific objects belonging to a simplex (such as e.g.\ the matrix determining its permitted colorings or the coefficients in the expression for a cocycle) depend on parameters that are different for different simplices. The difference between `constant' and `nonconstant' cohomology theories can be seen on the \emph{hexagon} example by comparing papers \cite{cubic} and~\cite{nonconstant}.

\subsection{The main result}\label{ss:mr}

The main result of this paper is the existence of a non-trivial 5-cocycle for the heptagon relation introduced in~\cite{hepta_1}, together with explicit algebraic formulas for it. This looks actually quite striking if compared with the apparent non-existence of a similar cocycle for other odd polygon relations of the same kind. Namely, we analyze here also pentagon, enneagon (9-gon) and hendecagon (11-gon).

Recall that the existence of higher (than pentagon and heptagon) odd polygon relations was announced in~\cite[Subsection~6.3]{hepta_1}, together with a brief but explicit explanation of the key step for their construction. We still leave their general analysis for future works, but as we want to compare heptagon with pentagon, enneagon and hendecagon, we simply write out  necessary formulas for these, and content ourself with the fact that, in these cases, the formulas (including polygon relations as such) can be checked directly using computer algebra.

Anyhow, we could not do without computer algebra altogether, even for the heptagon, both when searching for our 5-cocycle and when checking its validity. This probably means that there are hidden algebraic structures, yet to be discovered.

\subsection{Notation changes with respect to paper~\cite{hepta_1}}\label{ss:ch}

Aristophanes Dimakis taught me a more elegant, than in~\cite{hepta_1}, way of vertex and simplex numbering in polygon relations, used in~\cite{DM-H,DK}. Accordingly, our Pachner move now replaces the cluster of 5-simplices with numbers 1, 3, 5 and 7 with the cluster of 5-simplices 2, 4 and~6. The heptagon looks now as in Figure~\ref{fig:hepta-new}
\begin{figure}
 \begin{center}
  \includegraphics[scale=1.25]{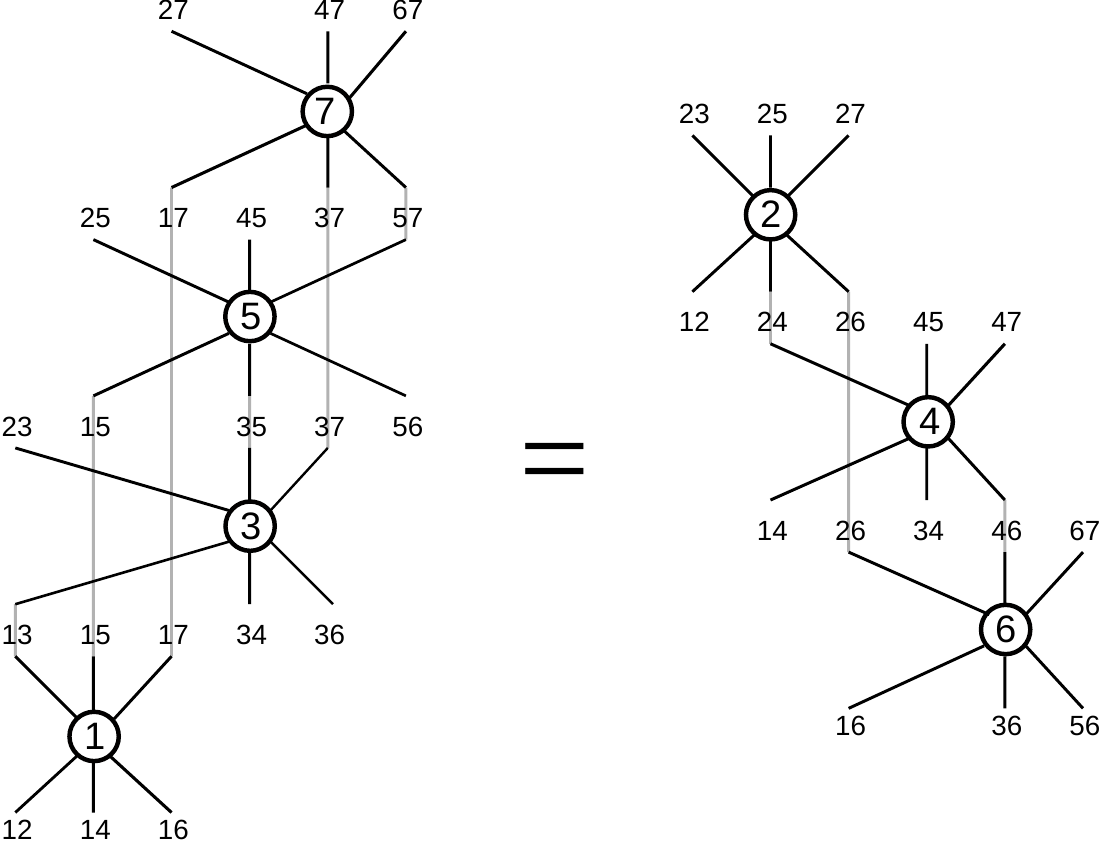}
 \end{center}
 \caption{Heptagon relation}
 \label{fig:hepta-new}
\end{figure}
rather than~\cite[Figure~1]{hepta_1}, and reads
\begin{equation}\label{hepta}
A_{123}^{(1)}A_{145}^{(3)}A_{246}^{(5)}A_{356}^{(7)}=A_{356}^{(6)}A_{245}^{(4)}A_{123}^{(2)}\,,
\end{equation}
instead of~\cite[Eq.~(1)]{hepta_1}. Note that the \emph{subscripts} in~\eqref{hepta} did \emph{not} change.

Also, we change the meaning of letter~$n$, to bring it in conformity with~\cite{DK}: our odd polygons are now $(2n+1)$-gons, so $n=2$ for pentagon, $n=3$ for heptagon, and so on.

\subsection{Contents of the rest of the paper}\label{ss:cr}

Below,
\begin{itemize} \itemsep 0pt
 \item in Section~\ref{s:h}, we recall our heptagon relation introduced in~\cite{hepta_1},
 \item in Section~\ref{s:coho}, we introduce both a general cohomology theory for heptagon and its specific `quadratic' version,
 \item in Section~\ref{s:4c}, we present a quadratic cocycle in dimension~4, and show its uniqueness,
 \item in Section~\ref{s:5c}, we present a quadratic cocycle in dimension~5 together with some related reasonings and observations,
 \item in Section~\ref{s:o}, we find out that, while analogues of the heptagon 4-cocycle exist for pentagon, enneagon and hendecagon, such analogues of the heptagon 5-cocycle do not exist,
 \item finally, in Section~\ref{s:d}, we discuss possible directions of further research.
\end{itemize}

\section{Heptagon relation}\label{s:h}

Here we recall some conventions and facts from~\cite{hepta_1}.

\subsection{Notations}\label{ss:nota}

Recall that~\eqref{hepta} and Figure~\ref{fig:hepta-new} mean the equalness of two products of matrices acting in the direct sum of six copies of field~$F$. We represent this direct sum as the 6-\emph{row} space, so our matrices act on \emph{rows} from the~\emph{right}. Each matrix~$A_{abc}^{(p)}$ acts nontrivially only in the copies (or: on the row elements) number $a$, $b$ and~$c$. Circles with numbers in Figure~\ref{fig:hepta-new} depict the seven 5-simplices taking part in move 4--3, while edges correspond to \emph{4-faces}: edge~$ij$ depicts the 4-face common for simplices $i$ and~$j$, including the case where these belong to the different sides of Figure~\ref{fig:hepta-new} (recall that the lhs and rhs of a Pachner move have the \emph{same} boundary). The order of $i$ and~$j$ is here, by definition, irrelevant: edge/face~$ij$ is the same as~$ji$.

It proved also convenient for us to denote the \emph{vertices} of simplices taking part in the Pachner move by the same letters (or numbers) as 5-simplices, using the following principle: 5-simplex~$i$ has all vertices $1,\ldots,7$ \emph{except}~$i$. That is, 5-simplex~$1$ can also be denoted as~$234567$; accordingly, 4-face~$12$ is the same as~$34567$.

Note that below, in Subsection~\ref{ss:ev}, we speak of \emph{edges of these simplices}, not to be confused with edges of Figure~\ref{fig:hepta-new}.

\subsection{Explicit expression for matrix entries}\label{ss:em}

Explicitly, entries of matrices~$A_{abc}^{(p)}$ are described as follows. First, we introduce a matrix
\begin{equation}\label{Gelm}
\mathcal M = 
\begin{pmatrix} \alpha_1 & \alpha_2 & \alpha_3 & \alpha_4 & \alpha_5 & \alpha_6 & \alpha_7 \\
                \beta_1 & \beta_2 & \beta_3 & \beta_4 & \beta_5 & \beta_6 & \beta_7 \\ 
                \gamma_1 & \gamma_2 & \gamma_3 & \gamma_4 & \gamma_5 & \gamma_6 & \gamma_7 
\end{pmatrix}
\end{equation}
with entries in~$F$, and determinants
\begin{equation}\label{dijk}
d_{ijk} = \left| \begin{matrix} \alpha_i & \alpha_j & \alpha_k \\ \beta_i & \beta_j & \beta_k \\ \gamma_i & \gamma_j & \gamma_k \end{matrix} \right|
\end{equation}
made of triples of its columns. We assume that the entries of~$\mathcal M$ are \emph{generic} enough in the exact sense that $d_{ijk}$ does not vanish for any pairwise different $i,j,k$.

Each separate matrix~$A^{(p)}$ looks as in Figure~\ref{fig:Ahepta}.
\begin{figure}
 \begin{center}
  \includegraphics[scale=1.25]{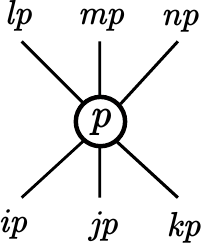}
 \end{center}
 \caption{Matrix~$A^{(p)}$ for heptagon}
 \label{fig:Ahepta}
\end{figure}
By definition, the entry of~$A^{(p)}$ corresponding to the input (lower) leg $ip=pi$ and output (upper) leg $lp=pl$ is
\begin{equation}\label{ansatz-h}
\left( A^{(p)} \right)_{ip}^{lp} = \frac{d_{jlp}d_{klp}}{d_{ijp}d_{ikp}},
\end{equation}
where $jp$ and~$kp$ are the other input legs of~$A^{(p)}$.

It was proved in~\cite{hepta_1} that matrices~\eqref{ansatz-h} satisfy~\eqref{hepta}.

\subsection{Edge vectors}\label{ss:ev}

The main technical tool used in~\cite{hepta_1} to construct our heptagon relation was \emph{edge vectors}---permitted colorings of the Pachner move such that nonzero colors are ascribed only to 4-faces containing a given edge $b=ij$. Recall that the explicit expression for the component~$e_b|_u$ of edge vector~$e_b$ corresponding to 4-face~$u$ is
\begin{equation}\label{dd}
e_{ij}|_u = d_{ilm}d_{jlm},
\end{equation}
where $l$ and~$m$ are the two vertices \emph{not} belonging to~$u$ (while $i$ and~$j$, of course, do belong).

Any four vectors $e_{ij}, e_{ik}, e_{il}, e_{im}$ corresponding to edges with a common vertex~$i$ are linearly dependent:
\begin{equation}\label{lmnx}
\lambda_{i,ij}^{(ijklm)} e_{ij}+\lambda_{i,ik}^{(ijklm)} e_{ik}+\lambda_{i,il}^{(ijklm)} e_{il}+\lambda_{i,im}^{(ijklm)} e_{im}=0 ,
\end{equation}
where~$\lambda_{i,ij}^{(ijklm)}$ are some numbers whose explicit expression is given in~\cite[Eq.~(13)]{hepta_1}.

\section{Cohomology theory}\label{s:coho}

\subsection{Nonconstant heptagon cohomology: generalities}\label{ss:h}

In this subsection, we define ``nonconstant heptagon cohomology'' in a general context.

Our definition will depend on a chosen simplicial complex~$K$. In principle, $K$ can be of any dimension, although the main work in this paper will take place in the standard 6-simplex~$K=\Delta^6$, whose boundary is the union of the lhs and rhs of any five-dimen\-sional Pachner move.

Suppose that every 4-simplex~$u\subset K$ is colored by some element $\mathsf x_u\in X$ of a set~$X$ of colors (for instance, our field $X=F$), and that a subset~$R_v$ of permitted colorings is defined in the set of all colorings of every 5-simplex~$v$ (for instance, by declaring that the ``output'' colors must be determined by the ``input'' ones, using matrix~\eqref{ansatz-h}).

We also define the set of permitted colorings for any simplex of~$K$ of dimension~$>5$: the coloring is permitted provided its restrictions on all 5-faces of that simplex are permitted. As for an individual 4-simplex, \emph{all} its colorings $\mathsf x\in X$ are permitted by definition.

The set of all permitted colorings of an $m$-simplex $i_0\dots i_m$ will be denoted~$\mathfrak C_{i_0\dots i_m}$. We assume here that the vertices of any simplex are ordered: $i_0<\ldots <i_m$.

Let an abelian group~$G$ be given. By definition, an \emph{$m$-cochain}~$\mathfrak c$ taking values in~$G$, for $m\ge 3$, consists of arbitrary mappings
\begin{equation}\label{nc}
\mathfrak c_{i_0\dots i_m}\colon\;\,\mathfrak C_{i_0\dots i_m} \to G
\end{equation}
for \emph{all} $m$-simplices $\Delta^m=i_0\dots i_m\subset K$.

The \emph{coboundary}~$\delta \mathfrak c$ of~$\mathfrak c$ consists then of mappings $(\delta \mathfrak c)_{i_0\dots i_{m+1}}$ acting on a permitted coloring~$r\in \mathfrak C_{i_0\dots i_{m+1}}$ of $(m+1)$-simplex $i_0\dots i_{m+1}$ according to the following formula:
\begin{equation}\label{cb}
(\delta \mathfrak c)_{i_0\dots i_{m+1}} (r) = \sum_{k=0}^{m+1} (-1)^k\, \mathfrak c_{i_0\dots \widehat{i_k} \dots i_{m+1}} (r|_{i_0\dots \widehat{i_k} \dots i_{m+1}}),
\end{equation}
where each $r|_{i_0\dots \widehat{i_k} \dots i_{m+1}}$---the restriction of~$r$ onto the $m$-simplex $i_0\dots \widehat{i_k} \dots i_{m+1}$---is of course a permitted coloring of this latter simplex.

Our `nonconstant heptagon cohomology' is the cohomology of the following \emph{heptagon cochain complex}:
\begin{equation}\label{hcc}
0 \to C^4 \stackrel{\delta}{\to} C^5 \stackrel{\delta}{\to} C^6 \stackrel{\delta}{\to} \dots\, ,
\end{equation}
where $C^m$ means the group of all $m$-cochains.

\subsection{Quadratic cohomology}\label{ss:q}

It turns out that there are some interesting variations of the cochain definition~\eqref{nc}. For instance, preprint~\cite{cubic} (although devoted to \emph{constant hexagon} cohomology) suggests that \emph{homogeneous polynomials} of a given degree may be used instead of general functions~\eqref{nc}---of course, in a situation where the notion of polynomial in the variables determining a permitted coloring makes sense.

Of special interest for us in this paper will be \emph{quadratic} cochains. That is, let $G=F$ be the same field~$F$ that we are using for the set of colors, and mappings~$\mathfrak c_{i_0\dots i_m}$ in~\eqref{nc} be quadratic forms on the linear spaces of permitted colorings of corresponding $m$-simplices~$i_0\ldots i_m$.

Remember (Subsection~\ref{ss:hc}) that our field~$F$ is of characteristic~$\ne 2$. Hence, there is a \emph{polarization} for any quadratic cochain. By definition, this is the \emph{symmetric bilinear cochain} depending on \emph{two} (independent from each other) permitted colorings:
\begin{equation}\label{bc}
\mathfrak c_{i_0\dots i_m}^{\mathrm{pol}}\colon\quad \mathfrak C_{i_0\dots i_m} \times \mathfrak C_{i_0\dots i_m} \to F,
\end{equation}
where each $\mathfrak c_{i_0\dots i_m}^{\mathrm{pol}}$ is the polarization of the corresponding~$\mathfrak c_{i_0\dots i_m}$ in~\eqref{nc}.

Mapping~\eqref{bc} can be treated as a \emph{scalar product} of two permitted colorings of simplex~$i_0\dots i_m$. So, to define a quadratic cochain is essentially the same as to define a scalar product for all corresponding simplices. In what follows, we will make extensive use of this fact.

\section{Quadratic 4-cocycle}\label{s:4c}

Here and in the next Section~\ref{s:5c}, we are working within the simplicial complex $K=\Delta^6=1234567$, and even within its boundary~$\delta\Delta^6$ which consists of seven 5-simplices and is the union of the two parts of a five-dimen\-sional Pachner move.

Let $x_{ip}$ denote the color of face~$ip$. Recall that $ip$ means, in this context, the 4-face containing all vertices $1,\ldots,7$ \emph{except} $i$ and~$p$.

\begin{proposition}\label{p:4c}
The cochain consisting of mappings
\begin{equation}\label{4c}
x_{ip} \mapsto c_{ip\,} x_{ip}^2,
\end{equation}
where
\begin{equation}\label{cip}
 c_{ip}=\prod_{\substack{\iota_1 \ne i,p\\ \iota_2 \ne i,p\\ \iota_1 < \iota_2}} d_{\iota_1 \iota_2 p} ,
\end{equation}
is a quadratic 4-cocycle.
\end{proposition}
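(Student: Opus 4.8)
The plan is to verify directly that the coboundary $\delta\mathfrak c$ vanishes on every $5$-simplex of $K=\Delta^6$. Consider the $5$-simplex whose vertex set is $\{1,\dots,7\}\setminus\{p\}$; writing these six vertices in increasing order and relabelling them $1,\dots,6$, its $4$-faces are $1p,\dots,6p$, so by the coboundary formula~\eqref{cb} a permitted coloring $r$ of it, with face colors $x_{kp}$, satisfies
\begin{equation}\label{dcp}
(\delta\mathfrak c)_p(r)=\sum_{k=1}^{6}(-1)^{k-1}\,c_{kp}\,x_{kp}^2 .
\end{equation}
As $F$ has characteristic $\neq2$, this quadratic form is zero iff its polarization is, and the polarization, being bilinear, is zero iff it vanishes on all pairs drawn from a spanning set of the space $\mathfrak C_p$ of permitted colorings of the $5$-simplex $p$. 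For that spanning set I would use the edge vectors $e^{(p)}_{ij}$ ($i\neq j$, both in $\{1,\dots,6\}$) of Subsection~\ref{ss:ev}: by~\eqref{dd} the component of $e^{(p)}_{ij}$ at the face $kp$ is $d_{ikp}d_{jkp}$, and these vectors span $\mathfrak C_p$ — this spanning property is precisely what the heptagon relation was built on in~\cite{hepta_1}. Hence Proposition~\ref{p:4c} reduces to the scalar identities
\begin{equation}\label{scid}
\sum_{k=1}^{6}(-1)^{k-1}\,c_{kp}\,d_{ikp}\,d_{jkp}\,d_{i'kp}\,d_{j'kp}=0
\end{equation}
for all pairs of edges $ij,\,i'j'$ of the $5$-simplex $p$ — including the degenerate pairs that coincide or share a vertex.

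The key idea for~\eqref{scid} is that \emph{every} determinant appearing in it, and in the definition~\eqref{cip} of $c_{kp}$, carries the same third index $p$. Viewing $d_{\alpha\beta p}$ as an alternating bilinear function of the columns $\mathcal M_\alpha,\mathcal M_\beta$ that annihilates $\mathcal M_p$, one obtains vectors $v_1,\dots,v_6\in F^2$ and a nonzero scalar $\mu$ with $d_{\alpha\beta p}=\mu\,[v_\alpha,v_\beta]$, where $[\,\cdot\,,\cdot\,]$ denotes the $2\times2$ determinant; genericity of $\mathcal M$ makes every $[v_a,v_b]$ with $a\neq b$ nonzero. Then $c_{kp}=\mu^{10}\prod_{a<b,\;a,b\neq k}[v_a,v_b]$, and a short sign count turns $(-1)^{k-1}\prod_{a<b,\;a,b\neq k}[v_a,v_b]$ into $-W/\prod_{a\neq k}[v_a,v_k]$, where $W=\prod_{1\le a<b\le6}[v_a,v_b]$ is nonzero and independent of $k$. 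Dividing~\eqref{scid} by the nonzero constant $-\mu^{14}W$, it becomes the purely two-dimensional identity
\begin{equation}\label{lagr}
\sum_{k=1}^{6}\frac{[v_i,v_k]\,[v_j,v_k]\,[v_{i'},v_k]\,[v_{j'},v_k]}{\prod_{a\neq k}[v_a,v_k]}=0 .
\end{equation}

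Identity~\eqref{lagr} is the $\mathrm{SL}_2$-covariant form of the classical Lagrange (divided-difference) identity: in an affine chart $v_a=(1,t_a)$ it reads $\sum_{k}q(t_k)/\prod_{a\neq k}(t_k-t_a)=0$ with $q(t)=\prod_{m\in\{i,j,i',j'\}}(t-t_m)$, which is the coefficient of $t^5$ in the degree-$\le5$ polynomial interpolating $q$ at the six nodes $t_1,\dots,t_6$, hence vanishes because $\deg q\le4<5$. One passes to the chart-free statement because both sides of~\eqref{scid} are polynomial in the entries of $\mathcal M$ and agree on a Zariski-dense subset — or, equivalently, by expanding $[v,v_k]^4$ monomial-by-monomial and invoking $\sum_k M(v_k)/\prod_{a\neq k}[v_a,v_k]=0$ for every quartic monomial $M$. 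I expect the real work to be bookkeeping rather than ideas: pinning down the two sign conventions (the $(-1)^{k-1}$ of~\eqref{dcp} and the one produced in the $W$-rewriting) and checking that the reduction to edge-vector pairs genuinely exhausts $\mathfrak C_p$, so that~\eqref{scid} — degenerate cases included — really does force $\delta\mathfrak c=0$. The conceptual reason the heptagon $4$-cocycle exists is exactly the inequality $4<5$ between the degree (in $v_k$) of the numerator in~\eqref{lagr} and the value one less than the number of its summands; this is also the quantity to watch when the same scheme is attempted in other dimensions or for other polygons.
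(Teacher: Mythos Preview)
Your proof is correct and takes a genuinely different route from the paper's. The paper first checks the special case of two \emph{disjoint} edges --- say $e_{12}$ and $e_{34}$ in the $5$-simplex $p=7$ --- where only the two summands $i=5,6$ in the polarized coboundary survive, and these cancel by inspection; it then observes that restricting the four-term dependence~\eqref{lmnx} (with $m=p$) to the simplex~$p$ kills the term $e_{ip}$ and leaves a three-term relation among edge vectors inside~$p$, and uses such relations to propagate the vanishing from disjoint edge pairs to all pairs. Your approach instead treats every pair of edges uniformly: the observation that each determinant in~\eqref{scid} carries the common index~$p$ lets you collapse $d_{\alpha\beta p}$ to a two-dimensional bracket $[v_\alpha,v_\beta]$, after which the product form~\eqref{cip} turns $(-1)^{k-1}c_{kp}$ into $-W/\prod_{a\ne k}[v_a,v_k]$ and the entire identity becomes one instance of the Lagrange divided-difference formula, the cocycle condition reducing to the degree count $\deg q=4<5$. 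The paper's argument stays within the edge-vector machinery of~\cite{hepta_1} and needs no external identity; yours is more conceptual, explains \emph{why} the coefficients $c_{ip}$ take precisely the product form~\eqref{cip} (it is what produces the Lagrange denominator), and --- as you point out at the end --- makes the generalization to other $(2n+1)$-gons, which the paper postpones to Subsection~\ref{ss:2n-2}, essentially transparent.
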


Geometrically, the product in~\eqref{cip} goes over the ten \emph{edges}~$\iota_1 \iota_2$ of 4-face~$ip$.

\medskip

We introduce the following scalar product of two arbitrary colorings $x$ and~$y$ of a 5-simplex~$p$:
\begin{equation}\label{xy4}
\sum_{i\ne p} \epsilon_i^{(p)} c_{ip}\,x_{ip}y_{ip} = \langle x, y \rangle_4^{(p)},
\end{equation}
where $\epsilon_i^{(p)}=\pm 1$ is the alternating sign on the set $\{1,\ldots,7\}\setminus \{p\}$ taken in the increasing order: the first element has sign~$+$, the second---sign~$-$, \ldots, the $i$th---sign~$\epsilon_i^{(p)}$. For \emph{permitted} $x$ and~$y$ this is, of course, just the coboundary of~\eqref{4c} for 5-simplex~$p$, hence, what we must prove is that \eqref{xy4} vanishes for permitted $x$ and~$y$.

Pay attention to the subscript~$4$ in the rhs of~\eqref{xy4}: it serves to distinguish this product from another one introduced below in~\eqref{q5}.

\begin{proof}[Proof of Proposition~\ref{p:4c}]
Take, first, edge vectors for two non-intersecting edges as $x$ and~$y$. For instance, consider 5-simplex $123456$ (that is, $p=7$), and let $x=e_{12}$ and $y=e_{34}$. Then, there are just two nonvanishing summands ($i=5$ and~$6$) in the lhs of~\eqref{xy4}, and they are easily seen to cancel each other.

Then we note that only \emph{restrictions} of colorings onto 5-simplex~$p$ take part in~\eqref{xy4}, and if we put $m=p$ in~\eqref{lmnx}, and restrict the lhs of~\eqref{lmnx} onto~$p$, then there remain only \emph{three} terms in the obtained linear dependence---because vertex~$p$ and hence edge~$ip$ do \emph{not} belong to the 5-simplex~$p$, see the second paragraph of Subsection~\ref{ss:nota}.

Now it is not hard to see that we can use such three-term linear dependences to show that \eqref{xy4} vanishes for \emph{any} edge vectors $x$ and~$y$.
\end{proof}

\begin{proposition}\label{p:4cnm}
There are no 4-cocycles linearly independent from cocycle given by \eqref{4c} and~\eqref{cip}.
\end{proposition}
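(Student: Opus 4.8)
The plan is to show that the space of quadratic 4-cochains on $\delta\Delta^6$ that are cocycles is exactly one-dimensional; since \eqref{4c}--\eqref{cip} already provides one nonzero element of it, that will finish the argument. A general quadratic 4-cochain assigns, to each of the $21$ 4-faces $ip$ of $\Delta^6$ (all of whose colorings are permitted, so the space of colorings is all of $F$), a quadratic form $c_{ip}x_{ip}^2$ in the single variable $x_{ip}$, i.e.\ one scalar $c_{ip}\in F$ per 4-face. So the cochain group $C^4$ is $21$-dimensional, and I must compute the kernel of $\delta\colon C^4\to C^5$. Using the polarization/scalar-product reformulation of Subsection~\ref{ss:q}, the cocycle condition for a 5-simplex $p$ is that the bilinear form $\langle x,y\rangle^{(p)}=\sum_{i\ne p}\epsilon_i^{(p)}c_{ip}\,x_{ip}y_{ip}$ (note: now the $c_{ip}$ are \emph{unknowns}, not those of \eqref{cip}) vanishes on all pairs of permitted colorings $x,y$ of simplex $p$.

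The key step is to turn "vanishes on permitted colorings of $p$" into linear equations on the unknowns $c_{ip}$. The space of permitted colorings of 5-simplex $p$ is spanned by the edge vectors $e_{ij}$ (restricted to $p$), $i,j\ne p$, and by Proposition~\ref{p:4c}'s proof these satisfy the three-term relations obtained from \eqref{lmnx} with $m=p$. First I would use a pair of edge vectors $e_{ij},e_{kl}$ attached to two \emph{disjoint} edges inside simplex $p$: exactly two 4-faces of $p$ receive nonzero colors from both, and setting $\langle e_{ij},e_{kl}\rangle^{(p)}=0$ gives one linear relation among two of the $c_{\cdot p}$. Ranging over all choices of $p$ and of disjoint edge pairs, these relations should already force all $c_{ip}$ (for $i$ running over $\{1,\dots,7\}\setminus\{p\}$, fixed $p$) to be proportional to the values in \eqref{cip} up to a single overall scalar $\mu^{(p)}$ depending only on $p$. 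Then the overlap conditions — a given 4-face $ip$ belongs to two 5-simplices, namely $p$ and $i$ — force $\mu^{(p)}=\mu^{(i)}$ for all $i\ne p$, so $\mu$ is a global constant. That is the cocycle, up to scale.

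Concretely I would organize it as: (1) reduce $C^4$ to the $21$ scalars $c_{ip}$; (2) for a single fixed $p$, extract from the disjoint-edge-vector conditions enough relations to pin down the $c_{ip}$ ($i\ne p$) up to one scalar $\mu^{(p)}$, checking that the cocycle of \eqref{cip} indeed satisfies them so the solution space is genuinely one-dimensional and not smaller; (3) propagate across all $p$ via the shared-4-face identifications to get a single global scalar; (4) observe \eqref{4c}--\eqref{cip} realizes it, hence $\dim Z^4=1$. The main obstacle I anticipate is step (2): I must make sure the linear system coming from the disjoint-edge-vector pairs (together, if needed, with relations from the three-term dependences and from pairs of \emph{intersecting} edge vectors) has rank exactly $\#\{i:i\ne p\}-1=5$ on the $c_{ip}$ — no more (else no cocycle survives), no less (else extra cocycles appear). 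Getting this rank count right probably cannot be done by a slick symmetry argument alone and will need either a careful bookkeeping of which 4-faces of $p$ are hit by each edge vector, or an explicit small computation — which is consistent with the paper's stated reliance on computer algebra. Everything else (steps 1, 3, 4) is routine combinatorics of the $\Delta^6$ face lattice plus the already-proved Proposition~\ref{p:4c}.
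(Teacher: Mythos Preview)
Your plan is essentially the paper's own argument: both hinge on the observation that for disjoint edges $ij,kl$ inside the 5-simplex~$p$ only the two 4-faces $mp,np$ (with $\{m,n\}$ the leftover vertices) contribute to $\langle e_{ij},e_{kl}\rangle^{(p)}$, so its vanishing fixes the single ratio $c_{mp}/c_{np}$. The paper's execution is shorter than you fear---the two coefficients in that two-term relation are products of the nonzero determinants from~\eqref{dd}, so the ratio is pinned down immediately with no computer algebra, no intersecting-edge pairs, and no three-term dependences needed; the paper then simply says ``similarly all ratios are determined,'' which subsumes your per-simplex $\mu^{(p)}$ bookkeeping and overlap step in one stroke.
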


\begin{proof}
We note first what follows, for instance, from the fact that $\langle e_{12}, e_{34}\rangle_4^{(7)}$ must vanish. Recall that `$7$' means here the 5-simplex~$123456$, and it has only two 4-faces containing edges $12$ and~$34$ at once---$12345$ and~$12346$, or, in other notations (see Subsection~\ref{ss:nota}), $67$ and~$57$. Hence, the vanishing of $\langle e_{12}, e_{34}\rangle_4^{(7)}$ determines $c_{57}/c_{67}$ uniquely. Similarly, all ratios between coefficients~$c_{ip}$ are also determined uniquely.
\end{proof}

\section{Quadratic 5-cocycle}\label{s:5c}

\subsection{Why a nontrivial quadratic 5-cocycle must exist}\label{ss:5c-why}

We first calculate the numbers of linearly independent quadratic 4-, 5- and 6-cochains in our simplicial complex $K=\Delta^6=1234567$.

\paragraph{4-cochains}
There are 21 linearly independent 4-cochains, one for each 4-dimen\-sional face~$ip$, namely cochains~$x_{ip}^2$.

\paragraph{5-cochains}
For each 5-simplex, the linear space of permitted colorings is 3-dimen\-sional: three arbitrary ``input'' colors determine three ``output'' ones. The space of quadratic forms of 3 variables is 6-dimen\-sional. And there are seven 5-simplices (that is, simplices with \emph{six} vertices!) in the heptagon relation.

Hence, there is the $7\times 6=42$-dimen\-sional linear space consisting of 7-tuples of quadratic forms, one for each 5-simplex.

\paragraph{6-cochains}
There are six independent ``input'' colors for the whole heptagon (see again Figure~\ref{fig:hepta-new}), so there are $\frac{6\times 7}{2}=21$ linearly independent quadratic forms.

\medskip

We now write out a fragment of sequence~\eqref{hcc} for quadratic cochains, \emph{assuming that the characteristic of our field~$F$ is zero}:
\begin{equation}\label{456}
\begin{pmatrix} 21\\ \text{4-cochains} \end{pmatrix} \xrightarrow[\mathrm{rank} = 20]{\textstyle\delta} \begin{pmatrix} 42\\ \text{5-cochains} \end{pmatrix} \xrightarrow[\mathrm{rank} = 21]{\textstyle\delta} \begin{pmatrix} 21\\ \text{6-cochains} \end{pmatrix} .
\end{equation}
Here follows the explanations.

First, ``21 cochains'' stays in~\eqref{456} for ``21-dimen\-sional space of cochains'', and so on.

Second, the rank of the left coboundary operator~$\delta$ in~\eqref{456} is $20$ and not~$21$ because there exists exactly one nonzero 21-tuple that gives exactly zero on each 5-simplex, according to Propositions \ref{p:4c} and~\ref{p:4cnm}.

Third, the rank of the \emph{right} operator~$\delta$ is surely $\le 21$, and this is already enough to conclude that the cohomology space dimension in the middle term is $\ge 42-20-21=1$. Actually, a direct calculation, made in characteristic~$0$, shows that the rank of the right~$\delta$ is exactly~$21$ for generic matrices~$\mathcal M$~\eqref{Gelm}, but is looks, at this moment, more difficult to understand why it is so. It is also unknown whether the assumption made right after formula~\eqref{dijk} is enough to guarantee that~$\mathcal M$ is generic in this sense.

\subsection{Explicit form of 5-cocycle}\label{ss:xpl5}

We define now one more scalar product, this time between two \emph{permitted} colorings of a 5-simplex~$p$. For the case where these colorings are the restrictions of edge vectors $e_{ij}$ and $e_{kl}$, respectively, on~$p$, we set
\begin{equation}\label{q5}
\langle e_{ij}, e_{kl} \rangle_5^{(p)} \, \stackrel{\mathrm {def}}{=} \, \det\eta_p \cdot (d_{ikp}d_{jlp}+d_{ilp}d_{jkp}),
\end{equation}
where
\begin{equation}\label{eta}
\eta_p = 
\begin{pmatrix} \alpha_i^2 & \alpha_j^2 & \alpha_k^2 & \alpha_l^2 & \alpha_m^2 & \alpha_n^2 \\[.5ex]
                \beta_i^2 & \beta_j^2 & \beta_k^2 & \beta_l^2 & \beta_m^2 & \beta_n^2 \\[.5ex]
                \gamma_i^2 & \gamma_j^2 & \gamma_k^2 & \gamma_l^2 & \gamma_m^2 & \gamma_n^2 \\[.5ex]
                \alpha_i \beta_i & \alpha_j \beta_j & \alpha_k \beta_k & \alpha_l \beta_l & \alpha_m \beta_m & \alpha_n \beta_n \\[.5ex]
                 \alpha_i \gamma_i & \alpha_j \gamma_j & \alpha_k \gamma_k & \alpha_l \gamma_l & \alpha_m \gamma_m & \alpha_n \gamma_n \\[.5ex]
                 \beta_i \gamma_i & \beta_j \gamma_j & \beta_k \gamma_k & \beta_l \gamma_l & \beta_m \gamma_m & \beta_n \gamma_n
\end{pmatrix} ,
\end{equation}
and $i,\ldots,n$ are the numbers from 1 through~7 \emph{except}~$p$, going in the increasing order.

As edge vectors make not a basis but an \emph{overfull system} of vectors in the linear space of all permitted colorings, the following proposition is necessary to justify this definition.

\begin{proposition}\label{p:corr5}
Formula~\eqref{q5} defines a scalar product in the linear space of permitted colorings of 5-simplex~$p$ correctly.
\end{proposition}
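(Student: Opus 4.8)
The plan is to show that the right-hand side of~\eqref{q5} descends to a well-defined symmetric bilinear form on the $3$-dimensional space $\mathfrak C_p$ of permitted colorings, despite being defined only on the overfull spanning set $\{e_{ij}|_p\}$. Concretely, writing $N_p := \det\eta_p$ for the common scalar prefactor (which depends only on $p$, not on $i,j,k,l$), I would define, for two permitted colorings $x=\sum_b s_b\, e_b|_p$ and $y=\sum_b t_b\, e_b|_p$ (sums over edges $b=ij$ with $i,j\neq p$), the candidate value $\langle x,y\rangle_5^{(p)} := N_p \sum_{b=ij,\,b'=kl} s_b t_{b'} (d_{ikp}d_{jlp}+d_{ilp}d_{jkp})$. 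The content of the proposition is that this does not depend on the chosen representations $\{s_b\}$, $\{t_b\}$, i.e.\ that the bilinear form on the abstract coefficient space $F^{E}$ (with $E$ the set of relevant edges) given by $B(s,t) = \sum s_b t_{b'} M_{bb'}$, where $M_{ij,kl} := d_{ikp}d_{jlp}+d_{ilp}d_{jkp}$, vanishes whenever $s$ or $t$ lies in the kernel of the linear map $F^{E}\to \mathfrak C_p$, $s\mapsto \sum s_b e_b|_p$. By symmetry of $M$ it suffices to check one side: $B(s,t)=0$ for all $t$ whenever $\sum_b s_b e_b|_p = 0$.

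The key step is therefore to identify the kernel relations and verify $M$ annihilates them. By the remark after the proof of Proposition~\ref{p:4c}, the relations~\eqref{lmnx} restricted to $p$ (taking $m=p$) become \emph{three}-term linear dependences among the edge vectors $e_{ij}|_p$ for edges with a fixed common vertex $i\neq p$ — and these span all relations, since the $e_{ij}|_p$ span a $3$-dimensional space inside the ($\binom{5}{2}=10$)-dimensional... more precisely, for each fixed vertex $i$ the four vectors $e_{ij}|_p,e_{ik}|_p,e_{il}|_p,e_{im}|_p$ (with $j,k,l,m$ the remaining vertices $\neq i,p$) satisfy one relation, and one checks these generate the full kernel. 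So the heart of the matter reduces to: for each such three-term relation $\mu_1 e_{ij}|_p + \mu_2 e_{ik}|_p + \mu_3 e_{il}|_p = 0$ (with $\mu$'s the restricted $\lambda$-coefficients of~\cite[Eq.~(13)]{hepta_1}) and for \emph{every} edge $b'=rs$ with $r,s\neq p$, the combination $\mu_1 M_{ij,rs} + \mu_2 M_{ik,rs} + \mu_3 M_{il,rs}$ vanishes. Writing $M_{ab,rs} = d_{arp}d_{bsp}+d_{asp}d_{brp}$, this is a polynomial identity in the entries of $\mathcal M$ that one expands using the explicit formulas for the $\lambda$'s and the Plücker-type relations among the $3\times 3$ minors $d_{\bullet\bullet p}$.

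I expect the main obstacle to be precisely this last verification: showing the three-term relation is ``compatible'' with the bilinear kernel $M$. The cleanest route is probably to recognize $M_{ij,kl}$ as (up to the prefactor $N_p$) an inner product coming from the natural pairing on symmetric $2$-tensors: indeed $\eta_p$ in~\eqref{eta} is built from the six products $\alpha^2,\beta^2,\gamma^2,\alpha\beta,\alpha\gamma,\beta\gamma$, i.e.\ from $\mathrm{Sym}^2$ of the columns of $\mathcal M$, and $d_{ikp}d_{jlp}+d_{ilp}d_{jkp}$ is, up to normalization, the $(ij),(kl)$ matrix element of the Gram-type form that makes the $e_b|_p$ into an image of a symmetric-square construction — so the vanishing on kernel vectors should follow structurally rather than by brute force. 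If that conceptual identification can be made precise, the proposition becomes almost immediate: the bilinear form $M$ is (a scalar multiple of) a genuine Gram matrix pulled back along $s\mapsto \sum s_b e_b|_p$, hence automatically factors through the image. Failing a slick argument, one falls back on direct computer-algebra expansion of the finitely many three-term-relation checks, which~\cite{hepta_1} has already supplied the ingredients for. Finally, symmetry of $\langle\cdot,\cdot\rangle_5^{(p)}$ is manifest from the symmetry $M_{ij,kl}=M_{kl,ij}$ of~\eqref{q5}, so no extra work is needed there.
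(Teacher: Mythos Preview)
Your overall strategy coincides with the paper's: reduce well-definedness to checking that the bilinear kernel $M_{ij,kl}=d_{ikp}d_{jlp}+d_{ilp}d_{jkp}$ is compatible with the three-term relations among restricted edge vectors. Where you diverge is in the execution of this key step. You leave it to either a speculative $\mathrm{Sym}^2$/Gram reinterpretation or brute-force expansion, whereas the paper has a short direct argument that you overlooked.

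The point is that in the three-term relation $\lambda_1 e_{ij_1}|_p+\lambda_2 e_{ij_2}|_p+\lambda_3 e_{ij_3}|_p=0$ the vertex~$i$ is \emph{fixed}, so
\[
\sum_{a=1}^{3}\lambda_a\, M_{ij_a,kl}
= d_{ikp}\!\left(\sum_{a}\lambda_a\, d_{j_a lp}\right)
+ d_{ilp}\!\left(\sum_{a}\lambda_a\, d_{j_a kp}\right),
\]
and the paper observes that each bracket vanishes on its own: the \emph{same} coefficients~$\lambda_a$ satisfy $\sum_a \lambda_a\, d_{j_a lp}=0$ for every~$l$, as a consequence of the explicit formula for the~$\lambda$'s in~\cite[Eq.~(13)]{hepta_1} together with a Pl\"ucker bilinear relation for the $3\times 3$ minors. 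Once you know this single-determinant identity, compatibility of~$M$ with the relations is immediate, and no computer algebra or structural reinterpretation is needed. Your $\mathrm{Sym}^2$ idea may well be related, but as stated it is not sharp enough to replace this step.
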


\begin{proof}
It must be checked that the definition~\eqref{q5} agrees with the three-term linear dependences mentioned in the proof of Proposition~\ref{p:4c}. Consider such a linear dependence
\begin{equation}\label{cor1}
\lambda_1 e_{ij_1}|_p + \lambda_2 e_{ij_2}|_p + \lambda_3 e_{ij_3}|_p = 0,
\end{equation}
and, on the other hand, the determinants $d_{jlp}$ and~$d_{jkp}$ entering in the rhs of~\eqref{q5}. It is a simple consequence from the explicit form~\cite[Eq.~(13)]{hepta_1} of lambdas and a Pl\"ucker bilinear relation for determinants that a similar to~\eqref{cor1} relation holds for any of these determinants, for instance,
\begin{equation}\label{cor2}
\lambda_1 d_{j_1 lp} + \lambda_2 d_{j_2 lp} + \lambda_3 d_{j_3 lp} = 0.
\end{equation}
Hence, for the scalar product defined according to~\eqref{q5} we also have the desirable equality showing that definition~\eqref{q5} is self-consistent:
\begin{equation}\label{corr}
\lambda_1 \langle e_{ij_1}, e_{kl} \rangle_5^{(p)} + \lambda_2 \langle e_{ij_2}, e_{kl} \rangle_5^{(p)} + \lambda_3 \langle e_{ij_3}, e_{kl} \rangle_5^{(p)} = 0. 
\end{equation}
\end{proof}

\begin{proposition}\label{p:cr}
Formulas \eqref{q5} and~\eqref{eta} define, indeed, a cocycle:
\begin{equation}\label{5c-usl}
\sum_{p=1}^7 (-1)^p \langle e_{ij}, e_{kl} \rangle_5^{(p)} = 0
\end{equation}
for any two edges $ij$ and~$kl$.
\end{proposition}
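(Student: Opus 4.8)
The plan is to reduce everything to an identity among $6\times 6$ determinants of the type~\eqref{eta}. Since the scalar product~\eqref{q5} is bilinear and symmetric, and since the permitted colorings of the whole heptagon (i.e.\ of $\delta\Delta^6$) are spanned by edge vectors $e_{ij}$, it suffices to verify~\eqref{5c-usl} for all pairs of edges $ij$ and $kl$; and by the self-consistency established in Proposition~\ref{p:corr5}, once we know it for edge vectors it automatically passes to arbitrary permitted colorings. The first step is therefore to substitute~\eqref{q5} into the left-hand side of~\eqref{5c-usl}, obtaining
\begin{equation}\label{pf:expand}
\sum_{p=1}^{7} (-1)^p \det\eta_p \cdot \bigl(d_{ikp}d_{jlp} + d_{ilp}d_{jkp}\bigr) = 0,
\end{equation}
where for each $p$ the determinant $\det\eta_p$ is built from the six columns $\{1,\dots,7\}\setminus\{p\}$ of the ``squared'' $6\times 7$ matrix whose rows are $\alpha_\bullet^2,\beta_\bullet^2,\gamma_\bullet^2,\alpha_\bullet\beta_\bullet,\alpha_\bullet\gamma_\bullet,\beta_\bullet\gamma_\bullet$. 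The key observation is that the seven quantities $(-1)^p\det\eta_p$ are, up to a common sign convention, exactly the $6\times 6$ minors of that $6\times 7$ matrix, hence they are the components of a covector annihilating the column span; equivalently, they satisfy the Laplace/Plücker-type relation
\begin{equation}\label{pf:laplace}
\sum_{p=1}^{7} (-1)^p \det\eta_p \cdot (\text{$p$th column of any row-vector built from the $\alpha,\beta,\gamma$}) = 0 .
\end{equation}

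The second and central step is to identify the bracketed factor $d_{ikp}d_{jlp}+d_{ilp}d_{jkp}$ in~\eqref{pf:expand}, as a function of $p$, with a linear combination of such ``$p$th columns''. Here one uses that each $d_{\iota\kappa p}$ is linear in the $p$th column $(\alpha_p,\beta_p,\gamma_p)^{\mathrm T}$ of~$\mathcal M$, so the product of two of them is a quadratic form in $(\alpha_p,\beta_p,\gamma_p)$, i.e.\ a linear combination of the six monomials $\alpha_p^2,\beta_p^2,\gamma_p^2,\alpha_p\beta_p,\alpha_p\gamma_p,\beta_p\gamma_p$ — precisely the six entries of the $p$th column of $\eta_p$'s ambient $6\times 7$ matrix. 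Concretely, writing $d_{\iota\kappa p} = \alpha_p \Delta^{\iota\kappa}_{\alpha} + \beta_p \Delta^{\iota\kappa}_{\beta} + \gamma_p \Delta^{\iota\kappa}_{\gamma}$ with cofactors $\Delta^{\iota\kappa}_{\bullet}$ independent of~$p$, one expands $d_{ikp}d_{jlp}+d_{ilp}d_{jkp}$ into the six monomials, with coefficients that are polynomials in the $\alpha,\beta,\gamma$ of the four fixed vertices $i,j,k,l$ only. Substituting this expansion into~\eqref{pf:expand} and invoking~\eqref{pf:laplace} six times (once per monomial), each term vanishes separately, which proves~\eqref{5c-usl}.

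The main obstacle I expect is not the structure of the argument but the bookkeeping in step two: one must check that the symmetrized combination $d_{ikp}d_{jlp}+d_{ilp}d_{jkp}$, rather than either product alone, is what makes the coefficients come out as genuine functions of the $p$-column that~\eqref{pf:laplace} can absorb — in other words, that the symmetrization is exactly what is needed for the six-monomial expansion to have $p$-independent coefficients that pair correctly against the minors $(-1)^p\det\eta_p$. A clean way to handle this is to note that $\eta_p$ itself is the matrix of the quadratic form $q\mapsto q(\text{column})^2$ evaluated on the six remaining columns, so that $\det\eta_p$ detects linear dependence among the \emph{squares} (more precisely, Veronese images) of those columns; the factor $d_{ikp}d_{jlp}+d_{ilp}d_{jkp}$ is then recognized as a value of the associated polarized (bilinear) form, and the whole identity~\eqref{5c-usl} becomes the statement that a certain $7\times 7$ determinant — formed by bordering the $6\times 7$ Veronese matrix with the row of these bracket values — has two proportional rows, hence vanishes. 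If the direct monomial expansion proves too unwieldy to present, I would fall back on verifying~\eqref{5c-usl} for the finitely many combinatorial types of edge pairs $\{ij,kl\}$ (disjoint edges, edges sharing one vertex, and $ij=kl$), each of which reduces to a short Plücker computation as in the proof of Proposition~\ref{p:corr5}.
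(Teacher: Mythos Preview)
Your argument is correct and genuinely different from the paper's own proof, which consists of the two words ``Direct calculation'' (i.e., a computer-algebra verification). What you supply is a conceptual reason: the seven numbers $(-1)^p\det\eta_p$ are the signed maximal minors of the $6\times 7$ Veronese matrix~$E$ whose $p$th column is $(\alpha_p^2,\beta_p^2,\gamma_p^2,\alpha_p\beta_p,\alpha_p\gamma_p,\beta_p\gamma_p)^{\mathrm T}$, so they annihilate every vector in the row span of~$E$; and since each $d_{\iota\kappa p}$ is linear in $(\alpha_p,\beta_p,\gamma_p)$, any product of two such determinants is already a linear combination of those six monomials with $p$-independent coefficients. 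That is precisely the ``hidden algebraic structure'' the paper alludes to in Subsection~\ref{ss:mr}, and it buys you a proof that works over any field of characteristic~$\ne 2$ without machine assistance.

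One remark: your stated ``main obstacle'' is not an obstacle at all. The symmetrization $d_{ikp}d_{jlp}+d_{ilp}d_{jkp}$ is \emph{not} needed for the Laplace step---each product $d_{ikp}d_{jlp}$ separately already lies in the row span of~$E$ and is therefore killed by the alternating minors. (Note also that when $p\in\{i,j,k,l\}$ the bracket vanishes automatically because some $d_{\bullet\bullet p}$ has a repeated index, matching the fact that the corresponding edge vector restricts to zero on simplex~$p$.) The symmetrization matters only for well-definedness of the scalar product on permitted colorings (Proposition~\ref{p:corr5}), not for the cocycle identity~\eqref{5c-usl} itself. You can drop the hedge and the fallback plan and present the argument directly.
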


\begin{proof}
Direct calculation.
\end{proof}

\subsection{Nontriviality}\label{ss:ntr}

\begin{proposition}\label{p:nt}
Cocycle defined according to \eqref{q5} and~\eqref{eta} is nontrivial---not a coboundary.
\end{proposition}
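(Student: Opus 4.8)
The starting point is the dimension count of Subsection~\ref{ss:5c-why}: over a field of characteristic zero the middle cohomology of~\eqref{456} is one-dimensional, so it suffices to show that the cocycle $z$ defined by~\eqref{q5}--\eqref{eta} is not a coboundary, i.e.\ $z\notin\operatorname{im}\bigl(\delta\colon C^4\to C^5\bigr)$; it then automatically generates $H^5$.

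I would first make the coboundaries explicit. A quadratic $4$-cochain is $\mathfrak c=\sum a_{ip}x_{ip}^2$ with arbitrary scalars $a_{ip}$ indexed by the $4$-faces $ip$, and, exactly as in~\eqref{xy4}, its polarized coboundary on the $5$-simplex~$p$ is the bilinear form $(x,y)\mapsto\sum_{i\ne p}\epsilon_i^{(p)}a_{ip}\,x_{ip}y_{ip}$ on the $3$-dimensional space $V_p$ of permitted colorings of~$p$. Hence on each~$p$ a coboundary is forced into the subspace $W_p$ spanned by the six ``squares'' $x_{ip}^2|_p$; by Propositions~\ref{p:4c} and~\ref{p:4cnm} these six forms satisfy exactly one linear relation, $\sum_{i\ne p}\epsilon_i^{(p)}c_{ip}\,x_{ip}^2|_p=0$, so that $W_p$ is a hyperplane in the $6$-dimensional space of quadratic forms on~$V_p$, cut out by a single linear functional $\ell_p$.

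Evaluating a coboundary on a pair of edge vectors $e_{ij},e_{kl}$ with $i,j,k,l,p$ pairwise distinct, and using $e_{ij}|_{cp}=d_{icp}d_{jcp}$, gives the value $\sum_{c\ne p}\epsilon_c^{(p)}a_{cp}\,d_{icp}d_{jcp}d_{kcp}d_{lcp}$, in which only the two indices $c=m,n$ outside $\{i,j,k,l,p\}$ survive; equating this with the right-hand side of~\eqref{q5} yields, for every~$p$, the overdetermined linear system
\[
\epsilon_m^{(p)}a_{mp}\,d_{imp}d_{jmp}d_{kmp}d_{lmp}+\epsilon_n^{(p)}a_{np}\,d_{inp}d_{jnp}d_{knp}d_{lnp}=\det\eta_p\bigl(d_{ikp}d_{jlp}+d_{ilp}d_{jkp}\bigr)
\]
in the unknowns $a_{ip}$. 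One then shows this system is inconsistent. The quickest route, when it applies, is to compute $\ell_p(z_p)$: since $z_p$ carries the overall factor $\det\eta_p$ and $\ell_p$ annihilates $W_p$, a Pl\"ucker computation should exhibit $\ell_p(z_p)$ as $\det\eta_p$ times a polynomial in the $d_{\iota\kappa p}$ which does not vanish identically, so that $z_p\notin W_p$ for generic $\mathcal M$ and $z$ cannot be a coboundary. A route that always works is to equip $C^5$ with a linear functional $\Phi=\sum_p(-1)^p\mu_p(\,\cdot\,)$, $\mu_p\in\operatorname{Sym}^2(V_p)$, annihilating $\delta C^4$ --- the constraints being $(-1)^p\epsilon_c^{(p)}\mu_p(x_{cp}^2)+(-1)^c\epsilon_p^{(c)}\mu_c(x_{cp}^2)=0$ for all $p\ne c$, a system whose solution space has dimension $\dim C^5-\operatorname{rank}\delta|_{C^4}=22$ and is one-dimensional modulo functionals vanishing on all of $Z^5$, in agreement with $\dim H^5=1$ --- and then checking $\Phi(z)\ne0$.

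I expect the genuine obstacle to be precisely this final nonvanishing. There seems to be no structural reason forcing $\ell_p(z_p)$ (equivalently $\Phi(z)$) to be nonzero, so in practice one specializes the entries of $\mathcal M$~\eqref{Gelm} to explicit characteristic-zero numbers and verifies by computer algebra that $\operatorname{rank}\delta|_{C^4}=20$ while the rank of that map with the coordinate column of $z$ adjoined is~$21$, i.e.\ $z\notin\operatorname{im}\delta$. One minor point to record is that the argument, like formula~\eqref{q5} itself, needs the slightly stronger genericity $\det\eta_p\ne0$ for all~$p$ --- no six of the ``vertex points'' on a conic --- which is not obviously implied by the assumption stated right after~\eqref{dijk}.
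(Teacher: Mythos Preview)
Your setup coincides with the paper's: a coboundary restricted to a $5$-simplex~$p$ has polarization $\sum_{c\ne p}\epsilon_c^{(p)}a_{cp}\,x_{cp}y_{cp}$, and you correctly isolate the two surviving terms $c=m,n$ when pairing edge vectors $e_{ij}$ and~$e_{kl}$. But you stop one step short of the finish. Look again at the left-hand side of your displayed system: the factor $d_{imp}d_{jmp}d_{kmp}d_{lmp}$ (and likewise with~$n$) depends only on the \emph{set} $\{i,j,k,l\}$, not on how it is partitioned into the pairs $(ij)$ and~$(kl)$. Hence any coboundary must assign the \emph{same} value to $\langle e_{12},e_{34}\rangle_5^{(7)}$, $\langle e_{13},e_{24}\rangle_5^{(7)}$ and $\langle e_{14},e_{23}\rangle_5^{(7)}$. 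From~\eqref{q5} these three numbers are $\det\eta_7$ times three visibly different combinations of the~$d_{\cdot\cdot 7}$, so the system is inconsistent for generic~$\mathcal M$. That is the paper's entire proof---two sentences---and it renders your $\ell_p$, your functional~$\Phi$, and the computer-algebra fallback unnecessary. Your closing remark that nontriviality requires some $\det\eta_p\ne 0$, a condition not obviously implied by the standing hypothesis after~\eqref{dijk}, is a fair observation on which the paper is silent.
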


\begin{proof}
Coboundary is, in this situation, a linear combination of~$x_{ip}^2$ taken over 4-faces. The scalar product $\langle x, y \rangle_5^{(p)}$ corresponding to such a cocycle would then be a linear combination of products~$x_{ip}y_{ip}$. This would imply, taking~\eqref{dd} into account, that $\langle e_{12}, e_{34} \rangle_5^{(7)}$, $\langle e_{13}, e_{24} \rangle_5^{(7)}$ and $\langle e_{14}, e_{23} \rangle_5^{(7)}$ would all three coincide---but they are actually all different.
\end{proof}

\subsection{One more observation}\label{ss:ob}

Matrix~$\mathcal M$~\eqref{Gelm} can be reduced, by a linear transformation of its rows, to the form where its three first columns form an identity matrix. Suppose this has been already done, that is,
\begin{equation}\label{id3}
\begin{pmatrix} \alpha_1 & \alpha_2 & \alpha_3 \\
                \beta_1 & \beta_2 & \beta_3 & \\ 
                \gamma_1 & \gamma_2 & \gamma_3 \end{pmatrix} =
\begin{pmatrix} 1 & 0 & 0 \\ 0 & 1 & 0 \\ 0 & 0 & 1 \end{pmatrix},
\end{equation}
and consider the determinant of matrix~$\eta_7$~\eqref{eta} for such~$\mathcal M$. A calculation shows that
\begin{align}\label{dh}
\det \eta_7 = 
 \alpha_4  \beta_4  \alpha_5  \gamma_5  \beta_6  \gamma_6 -  \alpha_4  \gamma_4  \alpha_5  \beta_5  \beta_6  \gamma_6 -  \alpha_4  \beta_4  \beta_5  \gamma_5  \alpha_6  \gamma_6 \\
+  \beta_4  \gamma_4  \alpha_5  \beta_5  \alpha_6  \gamma_6 +  \alpha_4  \gamma_4  \beta_5  \gamma_5  \alpha_6  \beta_6 -  \beta_4  \gamma_4  \alpha_5  \gamma_5  \alpha_6  \beta_6 \\
\stackrel{\mathrm {def}}{=} -\dethad
\begin{pmatrix} \alpha_4 & \alpha_5 & \alpha_6 \\
                \beta_4 & \beta_5 & \beta_6 & \\ 
                \gamma_4 & \gamma_5 & \gamma_6 \end{pmatrix} ,
\end{align}
where function `$\dethad$' on $3\times 3$ matrices was introduced in~\cite[Eq.~(7)]{igrushka} in connection with what seemed a completely different problem---evolution of a discrete-time dynamical system, where one step of evolution consisted in taking, first, the usual inverse of a matrix, and second---the ``Hadamard inverse'', that is, inverting each matrix entry separately.

\section{Absense of similar cocycles for pentagon, enneagon and hendecagon}\label{s:o}

\subsection[$(2n+1)$-gon relations for other $n$]{$\boldsymbol{(2n+1)}$-gon relations for other $\boldsymbol{n}$}\label{ss:n}

It was announced in~\cite[Subsection~6.3]{hepta_1} that our heptagon relation can be generalized to any $(2n+1)$-gon, $n=2,3,\ldots$. The \emph{form} of such relations has been explained in detail in~\cite[Section~II.B]{DK} (while \emph{different} $(2n+1)$-gon relations of the same form were presented in~\cite[Section~III]{DK}).

Here we want only to comment on the results of computer calculations for pentagon, enneagon (9-gon) and hendecagon (11-gon). This allows us just to say that all mentioned polygon relations do hold, and this can be checked directly using computer algebra, if we set the entries of the corresponding matrices~$A^{(p)}$ to have the following entries:
\begin{equation}\label{ansatz-g}
\left( A^{(p)} \right)_{ip}^{lp} = \prod_{\substack{j\ne i\\ jp\;\mathrm{inputs}}} \frac{d_{jlp}}{d_{ijp}}\,. 
\end{equation}
In greater detail: the product in~\eqref{ansatz-g} runs over all~$j$ such that the lower (input) legs of~$A^{(p)}$ are marked~$jp$, except $j=i$, see again Figure~\ref{fig:Ahepta} for the heptagon example. The determinants~$d_{ijk}$ are the same as in~\eqref{dijk}, except that matrix~$\mathcal M$~\eqref{Gelm} must have $2n+1$ columns (but still three rows).

Our definition~\eqref{ansatz-g} generalizes \eqref{ansatz-h}, of course. It works actually for all $(2n+1)$-gons, but the proof of this general fact (not relying on computer algebra) will appear elsewhere.

\subsection[$(2n-2)$-cocycles]{$\boldsymbol{(2n-2)}$-cocycles}\label{ss:2n-2}

Nontrivial $(2n-2)$-cocycles are given, for a general~$n$, by the very same formulas \eqref{4c} and~\eqref{cip}. The conceptual and general proof of this statement---a generalization of our Proposition~\ref{p:4c}---requires, however, some preparatory work, and we leave it for a future paper specifically addressing these issues. As far as only pentagon, enneagon and hendecagon relations are concerned, it can be checked quite easily using computer algebra.

Their uniqueness---the generalization of Proposition~\ref{p:4cnm}---also holds, and this time not much work is required. For clarity, we show how to do it on the example of enneagon. According to~\cite[Subsection~6.3]{hepta_1}, edge vectors are replaced, for enneagon, with \emph{triangle vectors}. Consider 7-simplex 12345678---also denoted simply~`9'---and triangle vectors for 123 and~456. The mentioned 7-simplex contains only two 6-faces, namely $1234567$ and $1234568$, also called $89$ and~$79$, containing both these triangles. Scalar product $\langle e_{123}, e_{456} \rangle_6^{(9)}$---the analogue of $\langle e_{12}, e_{34} \rangle_4^{(7)}$ from the proof of Proposition~\ref{p:4cnm}---must vanish, and this determines $c_{79} / c_{89}$ uniquely. Similarly, all ratios between coefficients $c_{ip}$ are also determined uniquely.

\subsection{Sequences of two coboundary operators for pentagon, enneagon and hendecagon}\label{ss:dd}

We now write out the analogues of sequence~\eqref{456} for pentagon, enneagon and hendecagon, assuming again the zero characteristic for field~$F$.

\paragraph{Pentagon}
\begin{equation}\label{234}
\begin{pmatrix} 10\\ \text{2-cochains} \end{pmatrix} \xrightarrow[\mathrm{rank} = 9]{\textstyle\delta} \begin{pmatrix} 15\\ \text{3-cochains} \end{pmatrix} \xrightarrow[\mathrm{rank} = 6]{\textstyle\delta} \begin{pmatrix} 6\\ \text{4-cochains} \end{pmatrix}
\end{equation}

\paragraph{Enneagon}
\begin{equation}\label{678}
\begin{pmatrix} 36\\ \text{6-cochains} \end{pmatrix} \xrightarrow[\mathrm{rank} = 35]{\textstyle\delta} \begin{pmatrix} 90\\ \text{7-cochains} \end{pmatrix} \xrightarrow[\mathrm{rank} = 55]{\textstyle\delta} \begin{pmatrix} 55\\ \text{8-cochains} \end{pmatrix}
\end{equation}

\paragraph{Hendecagon}
\begin{equation}\label{89A}
\begin{pmatrix} 55\\ \text{8-cochains} \end{pmatrix} \xrightarrow[\mathrm{rank} = 54]{\textstyle\delta} \begin{pmatrix} 165\\ \text{9-cochains} \end{pmatrix} \xrightarrow[\mathrm{rank} = 111]{\textstyle\delta} \begin{pmatrix} 120\\ \text{10-cochains} \end{pmatrix}
\end{equation}

The ranks of the \emph{left} operators~$\delta$ are always less than the number of $(2n-2)$-faces (and $(2n-2)$-cochains) by one, due to the reasons explained in the previous Subsection~\ref{ss:2n-2}.

The ranks of the \emph{right} operators~$\delta$ are again (like it was for sequence~\eqref{456}) more complicated, and were calculated using computer algebra and for generic parameters (entries of matrix~$\mathcal M$~\eqref{Gelm} analogues).

It follows from these ranks that there are no nontrivial cocycles in the middle terms, in a surprising contrast with the heptagon case!

\section{Discussion}\label{s:d}

Finally, some comments on possible directions of further research.

\paragraph{Other polygons and characteristics}
Rank calculations for the \emph{right} arrows in \eqref{234}, \eqref{678} and~\eqref{89A} were done only for the zero characteristic of field~$F$, and for a few lowest polygons. It is not known what awaits us outside these restrictions.

\paragraph{Homogeneous polynomial $\boldsymbol{(2n-1)}$-cocycles in finite characteristic from quadratic $\boldsymbol{(2n-2)}$-cocycles in characteristic~$\boldsymbol{0}$}
This can be done in the following four steps.
\begin{enumerate}
 \item Polarization: switch to the \emph{bilinear} form corresponding to a given quadratic cocycle. There appear thus \emph{two} permitted colorings, each enters linearly:
\[
c_{ij}x_{ij}^2 \mapsto c_{ij}x_{ij}y_{ij}.
\]
 \item Raise $x_{ij}$ into a degree~$p^k$, while $y_{ij}$ into a degree~$p^l$,  \ $k,l=1,2,\ldots$. It may call to mind Frobenius endomorphisms, but note that we are still in characteristic~$0$. We get polynomial \emph{cochain}
\[
c_{ij}x_{ij}^{p^k}y_{ij}^{p^l},
\]
homogeneous separately in $x$'s and~$y$'s.
 \item Take the coboundary. As we started with a \emph{cocycle}, the result is divisible by~$p$, other terms cancel out.
 \item Divide by~$p$ and reduce modulo~$p$, like it is done in a usual Bockstein homomorphism.
\end{enumerate}

\paragraph{From heptagon to hexagon with two-component colors}
Consider a 4-dimen\-sional Pachner move 3--3 and then the bicones over its lhs (initial configuration) and rhs (final configuration). Bicone means here the same as the join~\cite[Chapter~0]{Hatcher} with the boundary~$\partial I=\{0,1\}$ of the unit segment $I=[0,1]$. It is an easy exercise to see that the lhs bicone can be transformed into the rhs one by, first, a 5-dimen\-sional Pachner move 3--4 and, second, move 4--3. If there are now permitted colorings defined for the 4-faces of the 5-simplices involved, like in this paper, then we can attach two colors, or call it a \emph{two-component} color, to each 3-face of 4-simplices in the 3--3 move from which we started. It will be interesting to study connections of this construction with papers~\cite{cubic,nonconstant}.

\end{document}